\title[Monoids of integer-valued polynomials]{Relative polynomial closure
and monadically Krull monoids of integer-valued polynomials}
\author[S.~Frisch]{Sophie Frisch}
\address{Institut f\"ur Mathematik A, Technische Universit\"at Graz,
Steyrergasse 30, 8010 Graz, Austria}
\email{frisch@math.tugraz.at}
\thanks{This publication was supported by the Austrian Science Fund FWF, 
grant P23245-N18}
\subjclass[2000]{Primary 13F20; Secondary 20M13, 13A05, 13B25, 11C08, 11R09}
\keywords{monoid, factorization, monadically Krull, divisor homomorphism,
divisor theory, integer-valued polynomial, polynomial closure}
\long\def\comment#1{\relax}
\long\def\commented_out#1{\relax}
\newtheorem{thm}{Theorem}[section]
\newtheorem{prop}[thm]{Proposition}
\newtheorem{lem}[thm]{Lemma}
\newtheorem{cor}[thm]{Corollary}
\theoremstyle{definition}
\newtheorem{Def}[thm]{Definition}
\theoremstyle{remark}
\newtheorem{rem}[thm]{Remark}
\newcommand{\monoid}[1]{\mathopen{[\![}#1\/\mathclose{]\!]}}
\newcommand{\closure}[2]{C_{#1}(#2)}
\def\C{\mathcal{C}} 
\def\F{\mathcal{F}} 
\def\H{\mathcal{H}} 
\def\P{\mathcal{P}} 
\newcommand{\N}{\mathbb{N}} 
\newcommand{\Z}{\mathbb{Z}} 
\newcommand{\tif}{\tilde f} 
\newcommand{\divides}{\mathrel{\mathop{\mid}}}
\DeclareMathOperator\vP{v_{P}}  
\DeclareMathOperator\dS{d_{S}}  
\DeclareMathOperator\Int{\mathrm{Int}}  
\DeclareMathOperator\Spec{\mathrm{Spec}}  
\begin{document}


\comment{
Let $D$ be a Krull domain, $K$ its quotient field, $S\subseteq D$
and $\Int(S,D)=\{f\in K[x]\mid f(S)\subseteq D\}$ the ring of
integer-valued polynomials on $S$. 
For $f\in \Int(S,D)$, we explicitly construct a divisor homomorphism
from the divisor-closed submonoid generated by $f$ to a finite sum  
of copies of $(\N_0,+)$. This implies that the multiplicative monoid 
$\Int(S,D)\setminus\{0\}$ is monadically Krull. In the case that $D$
is a discrete valuation domain, we give the divisor theories of various
submonoids of $\Int(S,D)$.
In the process, we modify the concept of polynomial closure in such a 
way that every subset of $D$ has a finite polynomially dense subset.
}

\begin{abstract}
Let $D$ be a Krull domain and $\Int(D)$ the ring of integer-valued
polynomials on $D$. For any $f\in \Int(D)$, we explicitly construct a 
divisor homomorphism from $\monoid{f}$, the divisor-closed submonoid of 
$\Int(D)$ generated by $f$, to a finite sum  of copies of $(\N_0,+)$. 
This implies that $\monoid{f}$ is a Krull monoid. 

For $V$ a discrete valuation domain, we give explicit divisor theories 
of various submonoids of $\Int(V)$. In the process, we modify the concept 
of polynomial closure in such a way that every subset of $D$ has a finite 
polynomially dense subset. The results generalize to $\Int(S,V)$, the
ring of integer-valued polynomials on a subset, provided $S$ doesn't
have isolated points in $v$-adic topology.
\end{abstract}

\maketitle

\section{Introduction}

The ring of integer-valued polynomials $\Int(\Z)$ enjoys quite
chaotic non-unique factorization, in the following sense: given 
any finite list of natural numbers $1<n_1\le n_2\le\ldots\le n_k$, 
one can find a polynomial $f\in\Int(\Z)$ that has exactly $k$ 
essentially different factorizations into irreducible elements of 
$\Int(\Z)$, namely, one with $n_1$ irreducible factors, one with 
$n_2$, etc.\ \cite{Fri13len}.
In contrast to this, A.~Reinhart \cite{Rei14monK} has shown 
that $\Int(D)$ is monadically Krull for any unique factorization 
domain $D$, which means that the divisor-closed submonoid $\monoid{f}$ 
generated by any single polynomial $f\in\Int(D)$ is a Krull monoid.
So we have here an interesting case of Krull monoids with rather
wild factorization properties.

In this paper we examine the ring of integer-valued polynomials 
on a subset of a Krull domain, and the divisor closed submonoid 
$\monoid{f}$ generated by a single polynomial $f\in\Int(S,D)$. 
If $S$ doesn't have any isolated points in any of the topologies
given by essential valuations of $D$, we can
construct a divisor homomorphism from $\monoid{f}$ to a finite direct 
sum of copies of $(\N_0,+)$ [Theorem \ref{Krull_divisor_hom}].
This implies that $\monoid{f}$ is a Krull monoid, and hence, that 
$\Int(S,D)$ is monadically Krull. 

In the special case where $D$ is a discrete valuation domain,
we can actually determine the divisor theories of certain
submonoids of $\Int(S,D)$ 
[Proposition \ref{local_divisor_hom} and 
Theorem \ref{monadic_local_divisor_theory}].

For the purpose of constructing divisor homomorphisms on monoids of 
integer-valued polynomials, we will study ``relative'' polynomial 
closure, that is, polynomial closure with respect to a subset of $K[x]$, 
in section~2. 
This modification 
of the concept of polynomial closure makes it possible to find finite 
polynomially dense subsets of arbitrary sets in section~3. 
Equipped with these finite polynomially dense sets we construct the 
actual divisor homomorphisms and, in some cases, divisor theories,
to finite sums of copies of $(\N_0,+)$ in sections~4 and 5.

A short review of integer-valued polynomial terminology: Let $D$ be 
a domain with quotient field $K$, $S\subseteq K$ and $f\in K[x]$. 
$f$ is called integer-valued if 
$f(D)\subseteq D$ and $f$ is called integer-valued on $S$ if 
$f(S)\subseteq D$. If there are several possibilities for $D$, we 
say $D$-valued on $S$ instead of integer-valued on $S$. 

The ring of integer-valued polynomials on $D$ is written $\Int(D)$, and 
the ring of integer-valued polynomials on a subset $S$ of the quotient 
field of $D$ is denoted by $\Int(S,D)$:
\[
\Int(S,D)=\{f\in K[x]\mid f(S)\subseteq D\},\quad \Int(D)=\Int(D,D).
\]

\begin{Def}\label{f_monoid_def}
Let $D$ be a domain with quotient field $K$, $S\subseteq D$ and 
$f\in\Int(S,D)$. The divisor-closed submonoid of $\Int(S,D)$ 
generated by $f$, which we write $\monoid{f}$, is the 
multiplicative monoid consisting of 
all $g\in \Int(S,D)$ for which there exists $m\in \N$ and 
$h\in\Int(S,D)$, such that $g\cdot h= f^m$. 
\end{Def}

Keep in mind that the elements of $\monoid{f}$ are not just
polynomials in $\Int(S,D)$ that divide some power of $f$ in
$K[x]$. The co-factor is also required to be in $\Int(S,D)$.
We will frequently use the following divisibility criterion for
$\monoid{f}$.

\begin{rem}\label{divisibility}
Let $\monoid{f}$ be the divisor closed submonoid of $\Int(S,D)$ as
in Definition $\ref{f_monoid_def}$ and $a,b\in\monoid{f}$.
Then $a$ divides $b$ in $\monoid{f}$ if and only if $a$ divides $b$
in $K[x]$ and the cofactor $c=b/a$ is in $\Int(S,D)$.
\end{rem}

Multiplying a polynomial in $\monoid{f}$ by a constant in $D$ 
does not in general result in an element of $\monoid{f}$. We can 
multiply elements of $\monoid{f}$ by some suitable constants, though.

\begin{lem}\label{mult_by_constant}
Let $V$ be the valuation domain of a valuation $v$ on $K$, $S\subseteq V$,
$f\in\Int(S,V)$ and $\monoid{f}$ the divisor-closed submonoid of
$\Int(S,V)$ generated by $f$. Let $g\in\monoid{f}$ and $a\in K$.
If $-\min_{s\in S}v(g(s))\le v(a)\le 0$ then $ag\in\monoid{f}$.
\end{lem}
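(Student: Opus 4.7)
The plan is to exploit the characterization of $\monoid{f}$ via cofactors given in Remark \ref{divisibility}. Since $g\in\monoid{f}$, there exist $m\in\N$ and $h\in\Int(S,V)$ with $gh=f^m$. The natural candidate cofactor witnessing $ag\in\monoid{f}$ is then $a^{-1}h$, since formally $(ag)(a^{-1}h)=gh=f^m$. So the whole task reduces to verifying two containments in $\Int(S,V)$: that $ag\in\Int(S,V)$ and that $a^{-1}h\in\Int(S,V)$.

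For the first, I would compute, for an arbitrary $s\in S$,
\[
v(ag(s))=v(a)+v(g(s))\ge -\min_{t\in S}v(g(t))+v(g(s))\ge 0,
\]
where the first inequality uses the hypothesis $v(a)\ge -\min_{s\in S}v(g(s))$ and the second uses that $v(g(s))\ge\min_{t\in S}v(g(t))$ by definition of the minimum. For the second, since $v(a)\le 0$ and $h\in\Int(S,V)$,
\[
v(a^{-1}h(s))=-v(a)+v(h(s))\ge 0+0=0.
\]
Both conditions of the hypothesis are used, each in exactly one of the two checks.

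Having verified both membership statements, I would conclude by invoking Remark \ref{divisibility}: the factorization $(ag)(a^{-1}h)=f^m$ with $ag,a^{-1}h\in\Int(S,V)$ exhibits $ag$ as a divisor of $f^m$ in $\Int(S,V)$, so $ag\in\monoid{f}$ by Definition \ref{f_monoid_def}. There is no real obstacle here; the statement is essentially a bookkeeping observation about how the two-sided bound on $v(a)$ is precisely what is needed to simultaneously keep $ag$ integer-valued and the cofactor $h$ ``divisible by $a$'' in $\Int(S,V)$.
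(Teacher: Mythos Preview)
Your proof is correct and follows exactly the same approach as the paper: choose $h\in\Int(S,V)$ and $m$ with $gh=f^m$, then observe that both $ag$ and $a^{-1}h$ lie in $\Int(S,V)$, so $(ag)(a^{-1}h)=f^m$ witnesses $ag\in\monoid{f}$. You simply spell out the valuation inequalities that the paper leaves to the reader; the only minor quibble is that Remark~\ref{divisibility} concerns divisibility between elements already known to lie in $\monoid{f}$, so the clean citation at the end is Definition~\ref{f_monoid_def} alone (which you also give).
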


\begin{proof}
Let $g,h\in\Int(S,V)$ and $m\in\N$ such that $gh=f^m$. Then
both $ag$ and $a^{-1}h$ are in $\Int(S,V)$, and $ag\cdot a^{-1}h=f^m$.
\end{proof}

We recall the definitions of ideal content and fixed divisor,
whose interplay will be an important ingredient of proofs.
Let $R$ be a domain and $f\in R[x]$. The content of $f$,
denoted $c(f)$, is the fractional ideal generated by the 
coefficients of $f$. 
If $R$ is a principal ideal domain, we identify, by abuse of notation,
ideals by their generators and say that $c(f)$ is the gcd of the 
coefficients of $f$. A polynomial $f\in R[x]$ is called primitive if 
$c(f)=R$, that is, in the case of a PID, if $c(f)=1$.

\begin{Def} \label{fixed_divisor_def}
Let $D$ be a domain with quotient field $K$,
$S\subseteq D$ and $f\in K[x]\setminus\{0\}$.
The \textit{fixed divisor of $f$ on $S$}, denoted $\dS(f)$, is the 
$D$-submodule of $K$ generated by the image $f(S)$. Note that
$\dS(f)$ is a fractional ideal. If $S=D$, we write $\mathrm{d}(f)$ for 
$\mathrm{d_D}(f)$.
If $D$ is a PID, we will, by abuse of
notation, sometimes write a generator to stand for the ideal, e.g.,
$\dS(f)=1$ for $\dS(f)=D$. A polynomial $f\in\Int(S,D)$ is called
\textit{image-primitive} if $\dS(f)=D$.
\end{Def}

For polynomials in $D[x]$, image-primitive implies primitive, but
not vice versa.
One difference between ideal content and fixed divisor is that
the ideal content is multiplicative for sufficiently nice rings (called
Gaussian rings), including principal ideal rings, whereas the fixed
divisor is not multiplicative. $\dS(f)\dS(g)$ contains $\dS(fg)$, but
the containment can be strict. 

\begin{rem} \label{imageprimitive_rem}
Two easy but useful facts:
\begin{enumerate}
\item
If $f\in\Int(S,D)$ is image-primitive then $f^n$ is image-primitive
for all $n\in\N$.
\item
If $f\in\Int(S,D)$ is image-primitive then all divisors in $\Int(S,D)$
of $f$ are also image-primitive.
\end{enumerate}
\end{rem}

\begin{rem} \label{min_v_notation}
In case $D$ is an intersection of valuation rings, then every
$f\in\Int(S,D)$ is also in $\Int(S,V)$ for all these valuation rings,
and $f$ may be image-primitive as an element of $\Int(S,V)$, but
not as an element of $\Int(S,D)$. In this case, we write 
\[
v(f(S)):= \min_{s\in S} v(f(s))
\]
and write $v(f(S))=0$ to express that $f$ is image-primitive when
regarded as an element of $\Int(S,V)$.
\end{rem}
 
Regarding valuation terminology: we use additive valuations, that 
is, a valuation is a map 
$v\colon K\setminus\{0\}\rightarrow \Gamma$, where $(\Gamma,+)$ is
a totally ordered group, satisfying
\begin{enumerate}
\item 
$v(ab)=v(a)+v(b)$
\item
$v(a+b)\ge \min(v(a),v(b))$
\end{enumerate}
and we set $v(0)=\infty$. The valuation ring of a valuation $v$
on a field $K$ is $V=\{k\in K\mid v(k)\ge 0\}$ and the valuation
group is the image of $v$ in $\Gamma$.

\section{Relative polynomial closure}

\begin{Def}[relative polynomial closure]\label{rel_poly_colsure}
Fix a domain $D$ with quotient field $K$. Let $T\subseteq K$ and
$\F\subseteq K[x]$. 

The polynomial closure of $T$ relative to $\F$ is 
\[
\closure{\F}{T}=
\{s\in K\mid  \forall f\in \F\cap\Int(T,D):\kern5pt f(s)\in D \}.
\]

If $T\subseteq S\subseteq K$, and $\closure{\F}{T}\supseteq S$ we call $T$ 
polynomially dense in $S$ relative to $\F$.

The definition of polynomial closure and polynomial density depends on
the choice of $D$. If there is any doubt about $D$, we say $D$-polynomial 
closure and $D$-polynomially dense.
\end{Def}

Polynomial closure relative to $K[x]$ is the ``usual'' polynomial closure,
introduced by Gilmer \cite{Gil89StDiv} and studied by 
McQuillan \cite{McQ91Gthm}, the present author \cite{Fri96scp},
Cahen \cite{Cah96pc}, Park and Tartarone \cite{PaTa05pced}
and Chabert \cite{Cha10PCVF}, among others.
The reason why we generalize the well-known concept of polynomial closure 
will become apparent in the next section: when we consider polynomial 
closure relative to a set of polynomials whose irreducible factors are 
restricted to a finite set, it becomes possible to find finite 
polynomially dense subsets of any fractional set.

\begin{rem}\label{closure_properties}
The following properties of polynomial closure relative to a subset
$\F$ of $K[x]$ are easy to check.
\begin{enumerate}
\item
$\closure{\F}{T}=\bigcap_{f\in \F\cap\, \Int(T,D)} f^{-1}(D)$
\item
Polynomial closure relative to $\F$ is a closure operator,
in the sense that
\begin{enumerate}
\item 
$T\subseteq \closure{\F}{T}$
\item
$\closure{\F}{\closure{\F}{T}}=\closure{\F}{T}$
\item
$T\subseteq S\Longrightarrow \closure{\F}{T}\subseteq \closure{\F}{S}$
\end{enumerate}
\item
Polynomial closure relative to $\F$ is the closure given by a Galois 
correspondence that maps every subset $T$ of $K$ to a subset of $\F$, 
and every subset $G$ of $\F$ to a subset of $K$, namely, 
\[
T\mapsto \F\cap \Int(T,D)  \quad\text{and}\quad
G\mapsto \bigcap_{f\in G} f^{-1}(D).  
\]
\item
If $\F_0\subseteq \F_1\subseteq K[x]$ then 
$\closure{\F_1}{T}\subseteq \closure{\F_0}{T}$.
\item
If $T$ is polynomially dense in $S$ relative to $\F_1$, and
$\F_0\subseteq \F_1$, then $T$ is polynomially dense in $S$ 
relative to $\F_0$.
\end{enumerate}
\end{rem}

When the domain $D$ is a valuation ring, then polynomially dense subsets 
of $S$ relative to $\F$ are easily characterized:

\begin{lem}\label{DensityCriterion} 
Let $v$ be a valuation on a field $K$, $V$ its valuation ring,
$T\subseteq S\subseteq K$ and $\F\subseteq K[x]$. Consider
\begin{enumerate}
\item
$\forall f\in\F$\ 
$\min_{t\in T}v(f(t))=\min_{s\in S} v(f(s))$
\item
$T$ is $V$-polynomially dense in $S$ relative to $\F$.
\end{enumerate}
{\rm (1)} implies {\rm (2)}. If $\F$ is closed under multiplication
by non-zero constants in $K$ then {\rm (2)} implies {\rm (1)}.
\end{lem}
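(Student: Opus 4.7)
\medskip
\noindent\textbf{Proof plan.}
The plan is to treat the two implications separately, exploiting the fact that $\Int(T,V)$ is characterized by the single inequality $v(f(t))\ge 0$ for all $t\in T$, so that the condition ``$f\in \F\cap\Int(T,V)$'' is literally the statement $\min_{t\in T}v(f(t))\ge 0$.

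For the implication $(1)\Rightarrow (2)$, I would simply unwind definitions. If $f\in\F\cap\Int(T,V)$, then by definition $\min_{t\in T}v(f(t))\ge 0$; by $(1)$, the same holds with $S$ in place of $T$, so $f(s)\in V$ for every $s\in S$. This gives $S\subseteq\closure{\F}{T}$, i.e.\ $T$ is polynomially dense in $S$ relative to $\F$. No hypothesis on $\F$ is needed here.

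For the implication $(2)\Rightarrow(1)$, fix $f\in\F$. One direction, $\min_{t\in T}v(f(t))\ge\min_{s\in S}v(f(s))$, is immediate from $T\subseteq S$. For the reverse, set $m=\min_{t\in T}v(f(t))$. The idea is to renormalize: pick $a\in K^{\times}$ with $v(a)=-m$, which lies in the value group of $v$. Then $af\in\F$ by the hypothesis that $\F$ is closed under multiplication by nonzero constants of $K$, and by construction $v((af)(t))\ge 0$ for all $t\in T$, so $af\in\F\cap\Int(T,V)$. By $(2)$, $(af)(s)\in V$ for every $s\in S$, i.e.\ $v(f(s))\ge m$, which is what we want.

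The only subtlety is the case $m=\infty$, i.e.\ $f$ vanishes identically on $T$: then there is no single $a$ with $v(a)=-m$. The fix is to apply the scalar-multiplication trick with $a\in K^{\times}$ of arbitrarily negative valuation; since in a nontrivial valuation the value group has no lower bound, the inequalities $v(f(s))\ge -v(a)$ forced by $(2)$ for all such $a$ collectively imply $f(s)=0$, so $v(f(s))=\infty$ as well. This handling of the ``$f$ vanishes on $T$'' case is the only real obstacle; everything else is a direct definition chase.
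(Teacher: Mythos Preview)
Your argument is correct and follows the same idea as the paper: both directions are handled by unwinding the definition of $\Int(T,V)$ via the valuation, and for $(2)\Rightarrow(1)$ one rescales $f$ by a constant $a\in K^\times$ so that $af$ lands in $\F\cap\Int(T,V)$, then invokes density. The only cosmetic difference is that the paper phrases $(2)\Rightarrow(1)$ as a contrapositive (if $\min_T>\min_S$, exhibit some $af\in\F\cap\Int(T,V)\setminus\Int(S,V)$), whereas you argue directly; and you explicitly treat the degenerate case $m=\infty$, which the paper passes over in silence.
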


\begin{proof}
$(1 \Rightarrow 2)$ For every polynomial $f\in\F\cap\Int(T,V)$,
$\min_{t\in T} v(f(t))\ge 0$. Therefore, by $(1)$,
$\min_{s\in S} v(f(s))\ge 0$ and hence $f\in\Int(S,V)$.

$(2 \Rightarrow 1)$ For every $f\in\F$,
$\min_{t\in T} v(f(t))\ge \min_{s\in S} v(f(s))$, since $T\subseteq S$.
If $f\in\F$ and $\alpha\in\Z$ are such that 
$\min_{t\in T} v(f(t))\ge \alpha>\min_{s\in S} v(f(s))$,
pick $a\in K$ with $v(a)=-\alpha$. 
Then $af\in\F\cap\Int(T,V)$, but $af\not\in\Int(S,V)$, so $T$ is
not $V$-polynomially dense in $S$ relative to $\F$.
\end{proof}

\section{Finite polynomially dense subsets}

Let $F$ be a finite set of irreducible polynomials in $K[x]$
and $\F$ the multiplicative submonoid of $K[x]$ generated
by $F$ and the non-zero constants of $K$. That is, $\F$
consists of all non-zero polynomials in $K[x]$ whose irreducible
factors in $K[x]$ are (up to multiplication by non-zero constants) 
in $F$.

We will now construct, for every subset $S$ of a discrete
valuation ring $V$, a finite polynomially dense subset of 
$S$ relative to $\F$.
It is possible to admit fractional subsets of $K$, but for
simplicity's sake we restrict ourselves to subsets of $V$.

By discrete valuation, we mean, more precisely, a discrete rank $1$ 
valuation, that is, a valuation $v$ whose value group 
is isomorphic to $\Z$. A normalized discrete valuation is one whose 
value group is actually equal to $\Z$. The valuation ring of a discrete 
valuation is called discrete valuation ring, abbreviated DVR.
As we all know, a DVR is a local principal ideal domain. 

\begin{rem}\label{zlemma}
Let $v$ be a discrete valuation on $K$ with valuation ring $V$,
$f\in K[x]$, and $L\supseteq K$ a finite-dimensional field 
extension over which $f$ splits. 
Let $w$ be an extension of $v$ to $L$ ($w\mid_K=v$),
$W$ the valuation ring of $w$ and $P$ its maximal ideal.
Say $f$ splits as 
$
f(x)=c\prod_{j=1}^k(x-b_j)\prod_{j=1}^m(x-a_j)
$
with $w(b_j)< 0$ for $1\le j\le k$ and $w(a_j)\ge 0$ for $1\le j\le m$
over $L$. 

Then for all $s\in V$,
\[
v(f(s))=w(c)+\sum_{j=1}^k w(b_j) + \sum_{j=1}^m w(s-a_j)
\]
\end{rem}

\begin{proof} This follows from the fact that $w(s\pm b)= w(b)$
whenever $w(b)<w(s)$.
\end{proof}

\begin{Def}\label{isol_point__def}
Let $X$ be a topological space and $S\subseteq X$. An isolated point
of $S$ is an element $s\in X$ having a neighborhood $U$ such that
$U\cap S=\{s\}$.
\end{Def}

\begin{prop} \label{finite_dense_set}
Let $v$ be a discrete valuation on $K$ and $V$ its valuation ring.
Let $F\ne\emptyset$ be a finite set of 
monic irreducible polynomials in $K[x]$ and $\F$ the set of those 
polynomials in $K[x]$ whose monic irreducible factors are all in $F$.
Let $S\subseteq V$.

\begin{enumerate}
\item
Then there exists a finite subset $T\subseteq S$ such that
\[
\forall f\in\F\; \min_{t\in T}v(f(t))=\min_{s\in S}(v(f(s)))
\]
and every such $T\subseteq S$ is, in particular, a finite set
that is polynomially dense in $S$ relative to $\F$.
\item
If no root of any $f\in F$ is an isolated point of $S$ in $v$-adic
topology, then the above set $T$ can be chosen such as not to 
contain any root of any $f\in F$.
\item
Let $L$ be the splitting field of $F$ over $K$, $w$ an extension of
$v$ to $L$ and $W$ the valuation ring of $w$. Let $A$ be the set of
distinct roots of polynomials of $F$ in $W$. Then $T$ in (1) and (2)
can be chosen with $\left|T\right|\le \max(1, \left|A\right|)$.
\end{enumerate}
\end{prop}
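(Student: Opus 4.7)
The plan is to combine Remark~\ref{zlemma} with an explicit ultrametric construction. For $f\in\F$ splitting over $L$ as $f(x) = c\prod_j(x-b_j)\prod_j(x-a_j)$ with $w(b_j)<0$ and $a_j\in A$, the zlemma gives $v(f(s)) = w(c) + \sum_j w(b_j) + \sum_j w(s-a_j)$ for every $s\in V$. Only the last sum depends on $s$, so the problem reduces to finding a finite $T\subseteq S$ such that, for every assignment of nonnegative multiplicities $(k_a)_{a\in A}$,
\[
\min_{t\in T}\sum_{a\in A}k_a\,w(t-a)\;=\;\min_{s\in S}\sum_{a\in A}k_a\,w(s-a).
\]

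First I would dispose of the degenerate cases ($S=\emptyset$, $A=\emptyset$, or $|S|=1$), each of which admits $T$ of size $\le 1$; note that when $A=\emptyset$ the function $s\mapsto v(f(s))$ is constant on $V$ by the zlemma. In the main case, set $\mu_a := \inf_{s\in S}w(s-a)$ for each $a\in A$; this is finite and realized in $S$. I would then define $T = \{s_a : a\in A\}$ with $w(s_a-a)=\mu_a$, making the individual choices carefully so that $T$ captures all the minima simultaneously.

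To verify (1), given any $f$ and any minimizer $s^*\in S$, I would pick $a^*\in A$ among the roots of $f$ maximizing $w(s^*-a^*)$ and show that $s_{a^*}$ achieves at least as small a sum as $s^*$. The comparison proceeds term-by-term via the standard ultrametric identity $w(x+y)=\min(w(x),w(y))$ when $w(x)\ne w(y)$: the $a^*$ term drops from $w(s^*-a^*)$ to $\mu_{a^*}$, while each remaining term $w(s_{a^*}-a)$ is pinned down by the pairwise valuation $w(a^*-a)$ through the same identity. The main obstacle is the tie case $\mu_{a^*}=w(a^*-a)$, where the identity yields only an inequality; this is where the careful choice of $s_a$ is essential. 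I expect to resolve this by selecting $s_a$ Pareto-optimally among the achievers of $\mu_a$, minimizing $w(s_a-a')$ jointly over $a'\in A$, and justifying its success by induction on $|A|$ via a recursive splitting of $S$ into the ball $\{s\in V:w(s-a^*)\ge \mu_{a^*}\}$ and its complement.

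For part (2), the non-isolation hypothesis is what allows the $s_a$ to be chosen outside $A$. Suppose for contradiction that every $s\in S$ realizing $\mu_a$ lies in $A$, and pick one such $s^*\in A$. For $s\in S\setminus\{s^*\}$, either $s$ is another achiever (only finitely many, lying in the finite set $A$) or $w(s-a)>\mu_a=w(s^*-a)$, in which case the ultrametric identity forces $w(s-s^*)=\mu_a$. Hence $\{w(s-s^*):s\in S\setminus\{s^*\}\}$ is bounded, making $s^*$ an isolated point of $S$ and contradicting the hypothesis. Therefore each $\mu_a$ is realized by some $s_a\in S\setminus A$, and the resulting $T$ contains no root of $F$. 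Part (3) is then immediate: the construction yields $|T|\le|A|$ in the main case and $|T|=1$ in the degenerate cases, so $|T|\le\max(1,|A|)$.
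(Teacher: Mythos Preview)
Your reduction via Remark~\ref{zlemma} to the problem of minimizing $\sum_{a\in A}k_a\,w(s-a)$ is correct and is exactly how the paper begins. The gap is in your construction of $T$. Taking one achiever $s_a$ of $\mu_a=\min_{s\in S}w(s-a)$ per root $a$, even chosen Pareto-optimally among the achievers, does not yield a dense set. Here is a concrete failure with three linear polynomials over a DVR $V$ with uniformizer $p$: let $a=0$, $b=p^{10}$, $c=1$ (so $w(a-b)=10$, $w(a-c)=w(b-c)=0$) and
\[
S=\{s_1,s_2,s_3\}=\{p^{10}+p^{20},\;p^{20},\;1+p^{100}\},
\]
whose $(w(\,\cdot\,-a),w(\,\cdot\,-b),w(\,\cdot\,-c))$-vectors are $(10,20,0)$, $(20,10,0)$, $(0,0,100)$. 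Then $\mu_a=\mu_b=0$ is achieved only by $s_3$, so necessarily $s_a=s_b=s_3$, while $s_c\in\{s_1,s_2\}$; your $T$ has only two elements. For $f=(x-a)(x-c)$ the minimum over $S$ is $10$ (at $s_1$), and for $f=(x-b)(x-c)$ it is $10$ (at $s_2$); no choice of $s_c$ makes $T=\{s_3,s_c\}$ attain both. Note also that the tie case $\mu_{a^*}=w(a^*-c)$ you flagged is exactly what occurs here, and your proposed ``recursive splitting of $S$ into the ball $\{s:w(s-a^*)\ge\mu_{a^*}\}$ and its complement'' cannot help: by definition of $\mu_{a^*}$ that ball is all of $S$ and its complement is empty.

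The paper's argument is structurally different. Instead of one global minimizer per root, it builds a finite covering of $S$ by residue classes and in each class $C$ picks a representative $t\in C\cap S$ that minimizes $w(\,\cdot\,-a)$ \emph{simultaneously for every $a\in A$} within $C\cap S$; such a local simultaneous minimizer exists because the construction refines each class until it either contains an $S$-element lying in a root-free subclass (which then serves) or shrinks to a single $S$-point. The bound $|T|\le|A|$ comes from the fact that each time a representative is chosen, at least one root is removed from the classes still to be processed. An alternative route the paper mentions (Dickson's theorem applied to $\{(w(s-a))_{a\in A}:s\in S\}\subseteq\N_0^{|A|}$) gives (1) and (2) quickly, but not the cardinality bound in (3).
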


\begin{proof}
Let $L$, $w$, $W$, and $A$ as in (3). Let $P$ be the maximal ideal of $W$.
We call the elements of $A$ ``the roots''. We may assume $S\ne\emptyset$ 
and $A\ne\emptyset$ (otherwise the claimed facts are trivial).
In view of Remark \ref{zlemma} it suffices to construct a set 
$T\subseteq S$ such that, for every finite sequence $(a_i)_{i=1}^m$ in $A$, 
\[\min_{t\in T}\sum_{i=1}^m w(t-a_i)=\min_{s\in S}\sum_{i=1}^m w(s-a_i)\]
We will do this by constructing a finite covering $\C$ of $S$ by disjoint 
sets $C\subseteq W$ and for each $C\in\C$ choosing a representative 
$t\in C\cap S$ such that $w(t-a)\le w(s-a)$ for every $a\in A$ and every 
$s\in C\cap S$. 
This representative $t\in C\cap S$ then satisfies $\forall f\in\F$ 
$v(f(t))=\min_{s\in C\cap S}v(f(s))$, by Remark \ref{zlemma}. If we 
take $T$ to be the set of representatives of covering sets $C\in\C$ 
then for every $f\in\F$, $\min_{s\in S}v(f(s))$ is realized by 
some $s\in T$. By Lemma \ref{DensityCriterion}, this makes $T$
polynomially dense in $S$ relative to $\F$.

For any ideal $I$ of $W$, we call a residue class $r+I$ ``relevant''
if $S\cap (r+I)\ne\emptyset$. 

We construct $\C$, $\C_n$ ($n\ge 0$) and $T$ inductively. Before step $0$, 
initialize $T=\emptyset$, $\C=\emptyset$, $\C_0=\{W\}$. 

At the beginning of step $n$, $\C$ is a finite set of relevant 
residue classes of various $P^k$ with $k<n$ while $\C_n$ is a finite 
set of relevant residue classes of $P^n$ each containing at least one root. 
In step $n$, initialize $\C_{n+1}=\emptyset$; then go through each 
$C\in \C_n$ and process it as follows:
\begin{enumerate}
\item
If $C\cap S=\{c\}$ with $c\in A$ then put $c$ in $T$ and $C$ in $\C$.
Note that in this case $C\cap V$ is a $v$-adic neighborhood of $c$
whose intersection with $S$ is $\{c\}$, and that therefore $c\in A$ is 
an isolated point of $S$.
\item
Else, if $C$ contains a relevant residue class $D$ of $P^{n+1}$ 
which doesn't contain a root, pick such a $D$, add a representative 
of $D\cap S$ to $T$; then put $C$ in $\C$.
\item
Else place all relevant residue classes of $P^{n+1}$ contained in $C$
(each containing a root, by construction) in $\C_{n+1}$.
\end{enumerate}
If $\C_{n+1}$ is empty at the end of step $n$, stop. Otherwise
proceed to step $n+1$.

Note that after each step $n$, $\C\cup\C_{n+1}$ is a covering of $S$.
When the algorithm terminates with $\C_{n+1}=\emptyset$, then $\C$ is
a covering of $S$ and $T$ contains for each $C\in\C$ a representative 
$t\in C\cap S$ satisfying $w(t-a)=\min_{s\in C\cap S}w(s-a)$ for all $a\in A$.
Therefore $v(f(t))=\min_{s\in C\cap S}v(f(s))$ for all $f\in\F$ by Remark
\ref{zlemma}.

The algorithm terminates when no root is left in $\bigcup\C_{n+1}$.
For each root $a\in A$, one can give an upper bound on $n$ such 
that $a$ is no longer in $\C_{n+1}$. Namely, let $n$ such that 
$w(a-a')<n$ for all roots $a\ne a'$. If $(a+P^{n+1})\cap S=\emptyset$
then a residue class containing $a$ has been dropped as not relevant
at or before step $n$, so $a+P^{n+1}\not\in\C_{n+1}$. 
If $(a+P^{n+1})\cap S=\{a\}$, then a residue class containing $a$ is
placed in $\C$ at step $n+1$ or earlier. Otherwise, $a+P^{n+1}$ contains
an element of $S$ other than $a$. Let $s\in (a+P^{n+1})\cap S$, with 
$w(s-a)=m$ minimal. Then $a+P^m$ will be placed in $\C$ by step $m$.

This shows (1). For (2), note that the set $T$ thus constructed 
contains no root of any $f\in F$ except such as are isolated points 
of $S$ in $v$-adic topology. For (3), note that every time an
element is added to $T$, a set containing at least one root
is transferred from $\C_n$ to $\C$ and the number of roots in 
$\bigcup_{C\in\C_n}C$ decreases.
\end{proof}

\begin{rem}
Thanks to the anonymous referee for pointing out that parts (1) and (2) 
of Proposition \ref{finite_dense_set} can be show more quickly by
applying Dickson's theorem \cite{GeHK06nuf}[Thm.~1.5.3],
which says that the set of
minimal elements of any subset $N$ of $\N_0^{m}$ is finite and 
that for every $a\in N$ there exists a minimal element $b\in N$ with 
$b\le a$, to the subset $N=\{ (w(s-a))_{a\in A}\mid s\in S\}$ of
$\N_0^A$.
\end{rem}

%
%

\section{Divisor theories for monoids of integer-valued polynomials
on discrete valuation rings}

A short review of monoid terminology used in the definition of 
divisor homomorphism:
By \textit{monoid} we mean a semigroup that has a neutral element.
All monoids we consider here are commutative, and they are cancellative,
that is, whenever $ab=cb$ or $ba=bc$, it follows that $a=c$.

Let $(M,+)$ be a commutative monoid, written additively, and $a,b\in M$.
\begin{enumerate}
\item
We say that $a$ \textit{divides} $b$ in $M$, and write $a\mid b$, whenever 
there exists $c\in M$ such that $a+c=b$. 
\item
We call an element $d\in M$ a \textit{greatest common divisor},
abbreviated gcd, of a subset $A\subseteq M$, if
\begin{enumerate} 
\item
$d\mid a$ for all $a\in A$
\item
for all $c\in M$: if $c\mid a$ for all $a\in A$ then $c\mid d$.
\end{enumerate}
\end{enumerate}

\begin{Def}\label{divisor_hom_def}
A monoid homomorphism $\varphi\colon G\rightarrow H$ is called
a divisor homomorphism if $\varphi(a)\mid \varphi(b)$ in $H$
implies $a\mid b$ in $G$. 

A divisor homomorphism 
$\varphi\colon G\rightarrow \sum_{i=1}^n (\N_0,+)$
is called a divisor theory if each of the basis
vectors $e_i$ (having $1$ in the $i$-th coordinate and zeros
elsewhere) occurs as gcd of a finite set of images $\varphi(g)$.
\end{Def}

We are preparing to construct divisor homomorphisms from certain
submonoids of $\Int(S,D)$, where $D$ is a Krull domain, to finite
sums of copies of $(\N_0,+)$, relating divisibility in $\Int(S,D)$
to divisibility in a finitely generated free commutative monoid,
which a priori looks much simpler. 
If $(M,+)$ is a direct sum of $k$ copies of $(\N_0,+)$, then the
divisibility relation in $M$ is just the partial order given by
the order relations on each component:
Let $a,b\in M$ with $a=(a_1,\ldots, a_k)$ and $b=(b_1,\ldots, b_k)$.
Then $a\mid b$ in $M$ is equivalent to $a_i\le b_i$ for all $1\le i\le k$.
Therefore, any set $\{(m_{i1},m_{i2},\ldots,m_{ik})\mid i\in I\}$ of 
elements of $M$ has a unique gcd, namely, 
$d=(\min_i(m_{i1}), \min_i(m_{i2}),\ldots, \min_i(m_{ik}))$.

In what follows, we denote the normalized discrete valuation on $K(x)$ 
corresponding to an irreducible polynomial $h\in K[x]$ by $v_h$; for 
$g\in K[x]$, $v_h(g)$ is the exponent to which $h$ occurs in the 
essentially unique factorization of $g$ in $K[x]$ into irreducible 
polynomials, and for $g_1/g_2\in K(x)$, $v_h(g_1/g_2)=v_h(g_1)-v_h(g_2)$.

In this section we examine the special case $\Int(S,V)$, where $V$ 
is a discrete valuation ring (DVR). 

\begin{prop} \label{local_divisor_hom}
Let $v$ be a normalized discrete valuation on $K$ and $V$ its valuation ring.
Let $H$ be a finite set of pairwise non-associated irreducible polynomials
in $K[x]$ and $\H$ the multiplicative submonoid of $K[x]$ generated by 
$H$ and the non-zero constants in $K$. 
Let $S\subseteq V$ such that no root of any $h\in H$ is an isolated
point of $S$ in $v$-adic topology.
Let $\F=\H\cap\Int(S,V)$.

There exists a finite subset $T$ of $S$ that is polynomially dense in $S$
relative to $\H$ and contains no root of any $h\in H$;
and for every such $T$
\[
\varphi\colon \F\rightarrow 
\sum_{h\in H}(\N_0,+)\oplus \sum_{t\in T}(\N_0,+),\quad
\varphi(g) = \left((v_h(g)\mid h\in H), (v(g(t))\mid t\in T)\right),
\]
is a divisor homomorphism. If $T$ is chosen minimal, $\varphi$ is a 
divisor theory.
\end{prop}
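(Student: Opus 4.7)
The plan is to verify the three assertions of the proposition in turn, each building on the earlier ones.

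Existence of $T$ is immediate from Proposition~\ref{finite_dense_set}(1)--(2), whose hypothesis on isolated points is precisely what is assumed here. Given such a $T$, the map $\varphi$ is well-defined and a monoid homomorphism essentially by construction: $v_h(g)\ge 0$ because $g\in\H$; $v(g(t))\ge 0$ because $g\in\Int(S,V)$ and $t\in T\subseteq S$; and additivity under multiplication holds componentwise. For the divisor property, suppose $\varphi(a)\mid\varphi(b)$ in the target. Since $v_h(a)\le v_h(b)$ for every $h\in H$, the quotient $q=b/a$ taken in $K(x)$ lies in $\H$. Because $T$ contains no root of any $h\in H$ and the $K[x]$-factors of $a$ come from $H$ up to scalars, $a(t)\ne 0$ for every $t\in T$, so $v(q(t))=v(b(t))-v(a(t))\ge 0$. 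Thus $q\in\H\cap\Int(T,V)$, and polynomial density of $T$ in $S$ upgrades this to $q\in\Int(S,V)\cap\H=\F$, whence $a\mid b$ in $\F$ by Remark~\ref{divisibility}.

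For the divisor theory claim under minimality of $T$, each standard basis vector of the target must be exhibited as the gcd (equivalently, coordinate-wise minimum) of a finite set of $\varphi$-images. The key ingredient is supplied by minimality: for each $t\in T$, since $T\setminus\{t\}$ fails to be polynomially dense in $S$, Lemma~\ref{DensityCriterion} yields $f_t\in\H$ with $v(f_t(t))<v(f_t(t'))$ for every $t'\in T\setminus\{t\}$; because $v$ is $\Z$-valued and $T$ avoids the roots of $H$, the gap is at least $1$. Fix a uniformizer $\pi$ of $V$. For the basis vector $e_{h_0}$ indexed by $h_0\in H$, I would start with $g_0=\pi^m h_0$ where $m=-\min_{t\in T} v(h_0(t))$, so that $v_{h_0}(g_0)=1$, the other $v_{h'}$-coordinates are $0$, and $\min_{t\in T} v(g_0(t))=0$; then for each $t^*\in T$ with $v(g_0(t^*))>0$ I adjoin $g_{t^*}=c_{t^*} h_0 f_{t^*}^{n}$, with $n$ chosen large enough that $t^*$ attains the $T$-minimum of $v(h_0 f_{t^*}^n(\cdot))$ (possible because the unit gaps in $v(f_{t^*}(\cdot))$ scale with $n$ while the deficits in $v(h_0(\cdot))$ are bounded) and $c_{t^*}$ chosen to pull $v(g_{t^*}(t^*))$ down to $0$ while keeping $g_{t^*}\in\F$. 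For the basis vector $e_{t_0}$ indexed by $t_0\in T$, I would take $g_0=\pi^{1-v(f_{t_0}(t_0))} f_{t_0}$ (giving $v(g_0(t_0))=1$ and $v(g_0(t))\ge 2$ for $t\ne t_0$), then for each $t\in T\setminus\{t_0\}$ adjoin $g_t=\pi^{-v(f_t(t))} f_t$ (giving $v(g_t(t))=0$ and $v(g_t(t'))\ge 1$ elsewhere), and finally the constant $\pi$ itself, which lies in $\F$ and contributes $0$ in each $v_h$-coordinate and $1$ in each $v(\cdot(t))$-coordinate. The coordinate-wise minimum of each family works out to the desired basis vector.

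The main obstacle is the $e_{h_0}$ construction: a naive choice $g=c\,h_0$ realizes $v(g(t^*))=0$ only if $t^*$ attains the $T$-minimum of $v(h_0(\cdot))$, which it need not do. The trick of multiplying by a high power of $f_{t^*}$ so that the strict unit gaps of $v(f_{t^*}(\cdot))$ eventually dominate the bounded deficit of $v(h_0(\cdot))$ is what makes essential use both of the discreteness of $v$ and of the minimality of $T$.
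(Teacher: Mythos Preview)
Your proof is correct and follows essentially the same approach as the paper: existence of $T$ from Proposition~\ref{finite_dense_set}, the divisor-homomorphism check via density of $T$, and for the divisor theory the key step of extracting, from minimality of $T$ via Lemma~\ref{DensityCriterion}, a polynomial $f_t\in\H$ whose $v$-value is strictly minimized at $t$. The only cosmetic differences are that the paper first normalizes $H$ to consist of primitive polynomials in $V[x]$ (so that each $h$ already lies in $\F$), and for the $e_h$ witnesses it uses the explicit exponent $\alpha=v(k(t))$ in $g_{th}=p^{-\alpha}k\,g_t^{\alpha}$ rather than your ``$n$ large enough'' argument; both variants rely on the same unit-gap observation you articulate.
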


\begin{proof}
The existence of a finite polynomially dense subset $T$ containing no
root of any $h\in H$ is Proposition \ref{finite_dense_set}. 
Once we have a finite dense set,
a minimal dense set can be obtained by removing redundant elements.

$\varphi$ is clearly a monoid homomorphism.
Now suppose
$a,b\in\F$ such that $\varphi(a)\mid \varphi(b)$, and 
set $c=b/a$. We must show $c\in\Int(S,V)$.

$\varphi(a)\mid \varphi(b)$ means $v_h(a)\le v_h(b)$ for all $h\in H$ 
and $v(a(t))\le v(b(t))$ for all $t\in T$. 
The first shows $c\in K[x]$, and therefore $c\in \H$, and the second 
shows that $c(t)\in V$ for all $t\in T$. 
Since $T$ is polynomially dense in $S$ relative to $\H$, it follows
that $c\in\Int(S,V)$. We have shown $\varphi$ to be a divisor
homomorphism.

It remains to show that every $e_h$ for any $h\in H$ and every $e_t$ 
for any $t\in T$ occurs as the gcd of a finite set of images of 
elements of $\F$, provided $T$ is minimal.

We may assume, without changing $\H$, $\F$ or $\varphi$ in any way, 
that the elements of $H$ are in $V[x]$ and primitive.

First, let $p$ be a generator of the maximal ideal of $V$.
The constant polynomial $p$ is an element of $\F$
satisfying $v_h(p)=0$ for all $h\in H$ and $v(p(t))=1$ for all $t\in T$.

Second, we note that every polynomial $h\in H$ is an element of
$\F$ satisfying $v_h(h)=1$ and $v_l(h)=0$ for every 
$l\in H\setminus\{h\}$.

Third, we show that for every $t\in T$, there exists 
$g_t\in\F$ such that $v(g_t(t))=0$ and $v(g_t(r))>0$ for all
$r\in T\setminus\{t\}$.
We use the minimality of $T$ and Lemma \ref{DensityCriterion}:
Since $T$ is polynomially dense in $S$ relative to $\H$, but
$T\setminus\{t\}$ is not, there exists a polynomial $k\in\H$ with
$v(k(t))=\min_{s\in S} v(k(s))$ and $v(k(r))>\min_{s\in S} v(k(s))$
for all $r\in T\setminus\{t\}$. 
Let $k$ be such a polynomial and $\alpha=v(k(t))$. Then
$g_t(x)=p^{-\alpha}k(x)$ has the desired properties.

Fourth, we show that for every $t\in T$ and $h\in H$ there exists
$g_{t h}\in\F$ such that $v(g_{t h}(t))=0$ and $v_h(g_{t h})>0$.
Let $k$ be any polynomial in $\F$ with $v_h(k)>0$. 
If $v(k(t))=\alpha>0$, set
$g_{t h}(x)=p^{-\alpha}k(x)g_t(x)^{\alpha}.$


Now for any $h\in H$ and $t\in T$,
\[
e_h=\gcd(\{\varphi(g_{t h})\mid t\in T\}\cup\{\varphi(h)\})
\quad\text{and}\quad
e_t=\gcd(\{\varphi(g_r)\mid r\ne t\}\cup \{\varphi(p)\}).
\]
\end{proof}

\section{Divisor homomorphisms on monadic monoids of integer-valued
polynomials}

What we have found out about the submonoid of $\Int(S,D)$ consisting of 
polynomials whose irreducible factors in $K[x]$ come from a fixed finite 
set, we now apply to the divisor closed submonoid of $\Int(S,D)$
generated by a single polynomial.

Recall that $\monoid{f}$, the divisor-closed submonoid of $\Int(S,D)$ 
generated by $f$, is the multiplicative monoid consisting of all those 
$g\in \Int(S,D)$ which divide some power of $f$ in $\Int(S,D)$. 
Also, it will be useful to recall the definition of image-primitive,
and of $\dS(f)$, the fixed divisor of $f$ on $S$ from
Definition \ref{fixed_divisor_def}.

First let us get a trivial case out of the way:

\begin{lem}\label{imageprimitive_f}
Let $V$ be a DVR, $S\subseteq V$ and $f\in V[x]$ with $\dS(f)=V$.
Let $F\subseteq V[x]$ be a set of primitive polynomials in $V[x]$
representing the different irreducible factors of $f$ in $K[x]$.
Let $\F_0$ be the multiplicative submonoid of $V[x]$ generated by $F$
and the units of $V$.
Then
\begin{enumerate}
\item
$\monoid{f}=\F_0$
\item
Every element $g$ of $\monoid{f}$ is in $V[x]$, is primitive, and 
satisfies $\dS(g)=V$.
\item
If $g,h\in \monoid{f}$, then $g$ divides $h$ in $\monoid{f}$ if and
only if $g$ divides $h$ in $K[x]$.
\item
$
\varphi\colon \monoid{f}\rightarrow
\sum_{h\in F} (\N_0,+),\quad
\varphi(g) = \left(v_h(g)\mid h\in F\right),
$
is a divisor theory.
\end{enumerate}
\end{lem}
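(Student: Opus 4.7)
The plan is to establish (2) first, since the other parts follow from it with very little work. Given $g\in\monoid{f}$, pick $h\in\Int(S,V)$ and $m\in\N$ with $gh=f^m$. Since $f$ is image-primitive, Remark~\ref{imageprimitive_rem}(1) makes $f^m$ image-primitive, and then Remark~\ref{imageprimitive_rem}(2) makes both $g$ and $h$ image-primitive on $S$. I would decompose $g=c_g g_0$ and $h=c_h h_0$ with $c_g,c_h\in K^*$ and $g_0,h_0\in V[x]$ primitive. Since $V$ is a PID, content is multiplicative (Gauss), so $g_0 h_0$ is primitive; combined with $c(f^m)=c(f)^m=V$, this forces $c_g c_h\in V^*$. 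Therefore $g_0$ divides $f^m$ in $V[x]$, and being in $\Int(S,V)$, Remark~\ref{imageprimitive_rem}(2) applies once more to give $\min_{s\in S}v(g_0(s))=0$. The identity $v(g(s))=v(c_g)+v(g_0(s))$ then yields $0=v(c_g)+0$, so $c_g\in V^*$, $g\in V[x]$, $g$ is primitive, and $\dS(g)=V$.

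For (1), I would write $f=u_0\prod_{h\in F} h^{a_h}$ in $K[x]$ with $a_h\ge 1$, noting $u_0\in V^*$ since $f$ and each $h$ are primitive in $V[x]$. Any element of $\F_0$ has the form $u\prod_h h^{b_h}$ with $u\in V^*$; taking $m=\max_h\lceil b_h/a_h\rceil$, it divides $f^m$ in $V[x]$ with cofactor again in $V[x]\subseteq\Int(S,V)$, hence lies in $\monoid{f}$. Conversely, any $g\in\monoid{f}$ is primitive in $V[x]$ by (2) and divides some $f^m$ in $K[x]$, so it factors as $u\prod h^{b_h}$ with $u\in K^*$; primitivity of $\prod h^{b_h}$ forces $u\in V^*$, placing $g$ in $\F_0$. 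Part (3) is then immediate from (1) and Remark~\ref{divisibility}: writing $g,b\in\monoid{f}$ in their $\F_0$-form, the cofactor $b/g$ is itself in $\F_0\subseteq V[x]\subseteq\Int(S,V)$.

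Part (4) is routine once (1)--(3) are in hand: $\varphi$ is a monoid homomorphism, and $\varphi(a)\mid\varphi(b)$ gives $v_h(a)\le v_h(b)$ for each $h\in F$, which by (1) forces $a\mid b$ in $K[x]$ with cofactor in $\F_0\subseteq\Int(S,V)$, upgraded to $a\mid b$ in $\monoid{f}$ by (3). Each basis vector is realized as $\varphi(h)=e_h$ itself, since $h\in\monoid{f}$ and the distinct primitive irreducibles in $F$ are pairwise coprime. The hard part is (2), specifically showing the $K^*$-scalar $c_g$ is actually a unit of $V$; the trick is to invoke image-primitivity twice---once for $g$, and once for its primitive part $g_0$, after using the multiplicativity of content over the DVR $V$ to identify $g_0$ as a genuine divisor of $f^m$ in $V[x]$.
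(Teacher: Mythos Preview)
Your proof is correct and follows essentially the same approach as the paper: decompose $g$ and its cofactor into a constant times a primitive polynomial in $V[x]$, use image-primitivity of the primitive parts together with Gauss to force the constants to be units, and then read off (1), (3), (4) from (2). The only cosmetic difference is the order of inferences---the paper first observes the primitive parts lie in $\F_0$ (hence are image-primitive) and then uses $g,h\in\Int(S,V)$ to get $v(c_g),v(c_h)\ge 0$, whereas you first show $g$ itself is image-primitive and then match it against $g_0$---but the content is identical.
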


\begin{proof}
We will show (1) and (2). The remaining statements follow from (1).

$f\in V[x]$ is image-primitive on $S$ and hence primitive.
The same holds for all powers of $f$ and for all divisors in $V[x]$
of any power of $f$ by Remark \ref{imageprimitive_rem}.
Therefore every divisor in $V[x]$ of any power of $f$ is in $\F_0$,
and vice versa, every element of $\F_0$ is a divisor in $V[x]$ of some
power of $f$. Therefore every element of $\F_0$ is image-primitive on $S$,
and also $\F_0\subseteq \monoid{f}$.

Now let $g\in\monoid{f}$. Let $m\in\N$ and $h\in\Int(S,V)$ with
$hg=f^m$.
Then $h=c\tilde h$ and $g=d\tilde g$ with $\tilde g,\tilde h\in\F_0$
and $c,d\in K$.
Since $\tilde g$ and $\tilde h$ are image-primitive on $S$, we must have
$v(c)\ge 0$ and $v(d)\ge 0$.
Since $f^m$ is primitive, $v(c) = -v(d)$. It follows that $v(c)=v(d)=0$
and therefore $g,h\in\F_0$.
\end{proof}

Let $D$ be a domain with quotient field $K$, $S$ a subset of $D$,
and $f\in\Int(S,D)$. Let $H$ be a set of representatives 
(up to multiplication by a non-zero constant) of the irreducible factors 
of $f$ in $K[x]$.
For instance, $H$ could be the set of monic irreducible factors of $f$
in $K[x]$. Or, in case that $D$ is a principal ideal domain, such as,
for instance, a discrete valuation domain, $H$ can be chosen to be the
set of primitive irreducible polynomials in $D[x]$ dividing $f$ in $K[x]$.
By $\H$ we denote the multiplicative submonoid of $K[x]\setminus\{0\}$ 
generated by $H$ and the constants in $K\setminus\{0\}$. (Note that $\H$ 
depends only on $f$, not on the choice of $H$). 
Obviously $\monoid{f}\subseteq \H\cap\Int(S,D)$. We now examine when
the equality holds. In this non-trivial case we can give a divisor
theory of $\monoid{f}$ [Theorem \ref{monadic_local_divisor_theory}].
Otherwise, we have to be content with a divisor homomorphism
[Theorem \ref{Krull_divisor_hom}].

\begin{thm}\label{monoids_equal}
Let $V$ be a discrete valuation domain with quotient field $K$,
$S\subseteq V$ and $f\in\Int(S,V)$. Let $\H$ be
multiplicative submonoid of $K[x]$ generated by the irreducible factors
of $f$ and the non-zero constants.
If $\dS(f)\ne V$ then \[\monoid{f}=\H\cap\Int(S,V)\].
\end{thm}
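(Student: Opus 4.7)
The plan is to prove the nontrivial inclusion $\H\cap\Int(S,V)\subseteq\monoid{f}$; the reverse inclusion is immediate, since every element of $\monoid{f}$ lies in $\Int(S,V)$ by definition and, being a divisor of some power of $f$ in $K[x]$, has only factors of $f$ as its $K[x]$-irreducible components, hence lies in $\H$.

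For the substantive direction, I would take $g\in\H\cap\Int(S,V)$ and first use $\H$-membership to find $m_0\in\N$ with $g\mid f^{m_0}$ in $K[x]$: every $K[x]$-irreducible factor of $g$ divides $f$, so such an $m_0$ exists (any integer at least as large as the maximum $v_h(g)$ over the irreducible factors $h$ of $f$ works). Set $k=f^{m_0}/g\in K[x]$. For any $n\ge 0$ one has $g\cdot(f^nk)=f^{m_0+n}$, so the task reduces to choosing $n$ with $f^nk\in\Int(S,V)$, which would show $g\mid f^{m_0+n}$ in $\Int(S,V)$ and hence $g\in\monoid{f}$.

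For this I would combine two observations. First, $\dS(f)\ne V$ forces $f(S)$ into the maximal ideal of $V$, hence $v(f(s))\ge 1$ for every $s\in S$. Second, every polynomial in $K[x]$ has $v$-values on $V$ bounded below: writing $k=c\tih$ with $\tih\in V[x]$ primitive and $c\in K$, we have $v(\tih(s))\ge 0$ and therefore $v(k(s))\ge v(c)$ for every $s\in V$. Set $N=\max(0,-v(c))$. Then for every $s\in S$,
\[
v\bigl(f^nk(s)\bigr)=n\,v(f(s))+v(k(s))\ge n-N,
\]
so any $n\ge N$ yields $f^nk\in\Int(S,V)$, producing the required cofactor in $\Int(S,V)$.

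The main obstacle---and the only place where the hypothesis $\dS(f)\ne V$ is used---is securing the strict inequality $v(f(s))\ge 1$. This is what lets the term $n\,v(f(s))$ absorb the (uniformly bounded) negative part of $v(k(s))$ as $n$ grows. If $v(f(s))=0$ were allowed for some $s\in S$, then at such an $s$ one would need $v(k(s))\ge 0$ outright, which is precisely the condition $k\in\Int(S,V)$ and cannot be engineered by raising $f$ to higher powers; so without this hypothesis there is no reason for $\monoid{f}$ to exhaust $\H\cap\Int(S,V)$.
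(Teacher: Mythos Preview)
Your argument is correct. Both you and the paper exploit the same key observation---that $\dS(f)\ne V$ gives $v(f(s))\ge 1$ for all $s\in S$, so high powers of $f$ can absorb any uniformly bounded negative valuation---but the two proofs package it differently.

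The paper proceeds indirectly: it first shows that $b\tif\in\monoid{f}$ for every $b\in V\setminus\{0\}$ (where $\tif$ is the primitive part of $f$), by writing $f^{m+1}=(f^m c(f)b^{-1})\cdot b\tif$ and choosing $m$ large via the Archimedean property. From this it deduces that all $V[x]$-divisors of $b\tif$, in particular all primitive irreducible factors of $f$ and all nonzero constants in $V$, lie in $\monoid{f}$; products of these then cover $\H\cap V[x]$, and a separate lemma (multiplying by constants of negative value while staying integer-valued) handles the remaining elements of $\H\cap\Int(S,V)$. Your route is more direct: you take an arbitrary $g\in\H\cap\Int(S,V)$, write $f^{m_0}=g\cdot k$ in $K[x]$, and show $f^n k\in\Int(S,V)$ for large $n$ by bounding $v(k(s))$ below uniformly. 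This avoids the build-up from generators and the auxiliary lemma on constants, at the cost of not isolating the intermediate fact that every $b\tif$ lies in $\monoid{f}$---a fact the paper does not use elsewhere, so nothing is lost.
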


\begin{proof}
Let $H$ be the set of primitive irreducible polynomials in $V[x]$
that divide $f$ in $K[x]$.
Let $f=c(f)\tif$ with $c(f)\in K\setminus\{0\}$ the content of $f$ 
and $\tif\in V[x]$ primitive. For arbitrary $b\in V\setminus\{0\}$,
we show that $b\tif \in\monoid{f}$. 

Let $b\in V\setminus\{0\}$.
Since $\dS(f)\ne V$, $v(\dS(f))>0$ and we may apply the Archimedean
axiom.  Let $m\in\N$ such that $m v(\dS(f))\ge v(b) - v(c(f))$.

Then $f^{m+1} = (f^m c(f) b^{-1}) b\tif$, and both
$(f^m c(f) b^{-1})$ and $b\tif$ are in $\Int(S,V)$.
Therefore $ b\tif\in\monoid{f}$.

Now that $b\tif\in\Int(S,V)$ for arbitrary $b\in V\setminus\{0\}$, all 
factors of $b\tif$ in $V[x]$ are in $\monoid{f}$.
Therefore, all primitive irreducible factors of $f$ 
and all non-zero constants of $V$, and furthermore, all products of such 
elements, are in $\monoid{f}$.  Finally, by Lemma \ref{mult_by_constant}, 
we can multiply elements of $\monoid{f}$ by any constant $a\in K$ with
$v(a)<0$, as long as the result is integer-valued on $S$. Therefore,
$\H\cap\Int(S,V)\subseteq \monoid{f}$. 

The reverse inclusion $\monoid{f}\subseteq \H\cap\Int(S,V)$ is trivial.
\end{proof}

\begin{thm}\label{monadic_local_divisor_theory}
Let $v$ be a normalized discrete valuation on $K$ and $V$ its valuation
ring.
Let $S\subseteq V$ and $f\in\Int(S,V)$, such that no root of $f$ is
an isolated point of $S$ in $v$-adic topology. 
Let $H$ be the set of different monic irreducible factors of $f$ in $K[x]$
and $\H$ the multiplicative submonoid of $K[x]$ generated by $H$ and the
non-zero constants in $K$.
By $\monoid{f}$ denote the divisor-closed submonoid of $\Int(S,V)$ 
generated by $f$. 

There exists a finite polynomially dense subset $T$ of $S$ relative 
to $\H$ that does not contain any root of $f$; and for every such $T$
\[
\varphi\colon \monoid{f}\rightarrow 
\sum_{h\in H}(\N_0,+)\oplus \sum_{t\in T}(\N_0,+)\quad
\varphi(g) = \left((v_h(g)\mid h\in H), (v(g(t))\mid t\in T)\right),
\]
is a divisor homomorphism. If $\dS(f)\ne V$ and $T$ is chosen minimal
then $\varphi$ is a divisor theory.
\end{thm}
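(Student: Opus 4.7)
The plan is to bootstrap three earlier results: Proposition \ref{finite_dense_set} for the existence of $T$, Proposition \ref{local_divisor_hom} for the divisor homomorphism on the ambient monoid $\H\cap\Int(S,V)$, and Theorem \ref{monoids_equal} to identify $\monoid{f}$ with this ambient monoid in the divisor-theory case. First I would invoke Proposition \ref{finite_dense_set}(2) with the finite set $H$ of monic irreducible factors of $f$ to produce the required finite $T\subseteq S$: the hypothesis on roots is exactly what Proposition \ref{finite_dense_set}(2) needs, since the roots of the polynomials in $H$ are precisely the roots of $f$.

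For the divisor homomorphism assertion, set $\F=\H\cap\Int(S,V)$. Since $\monoid{f}\subseteq\F$, the map $\varphi$ is well-defined on $\monoid{f}$ as the restriction of the map of Proposition \ref{local_divisor_hom}, and it is clearly a monoid homomorphism. Proposition \ref{local_divisor_hom} asserts that $\varphi$ is a divisor homomorphism on $\F$. To promote this to $\monoid{f}$, suppose $a,b\in\monoid{f}$ with $\varphi(a)\mid\varphi(b)$. Then $a\mid b$ in $\F$, so the cofactor $c=b/a$ lies in $\F\subseteq\Int(S,V)$. The point to verify is that $c$ actually lies in $\monoid{f}$: since $b\in\monoid{f}$, there exist $m\in\N$ and $h\in\Int(S,V)$ with $bh=f^m$, and rewriting as $c\cdot(ah)=f^m$ with $ah\in\Int(S,V)$ shows $c\in\monoid{f}$. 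Hence $a\mid b$ in $\monoid{f}$ by Remark \ref{divisibility}.

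For the divisor theory claim, assume $\dS(f)\ne V$ and $T$ minimal. Theorem \ref{monoids_equal} identifies $\monoid{f}=\F$, and Proposition \ref{local_divisor_hom} states that $\varphi$ is a divisor theory on $\F$ precisely when $T$ is minimal; the conclusion follows immediately. The only step requiring any genuine (though mild) work is the promotion from ``divisor homomorphism on $\F$'' to ``divisor homomorphism on $\monoid{f}$'' in the second paragraph; this hinges on the divisor-closedness of $\monoid{f}$ in $\Int(S,V)$, which is what lets us relocate the cofactor $c$ from $\F$ back into $\monoid{f}$. Everything else is a matter of invoking the earlier results with matching hypotheses.
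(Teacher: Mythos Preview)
Your proof is correct and follows essentially the same approach as the paper: restrict the divisor homomorphism of Proposition~\ref{local_divisor_hom} from $\F=\H\cap\Int(S,V)$ to $\monoid{f}$, and then invoke Theorem~\ref{monoids_equal} plus the divisor-theory part of Proposition~\ref{local_divisor_hom} when $\dS(f)\ne V$. In fact your second paragraph is more careful than the paper's own argument: the paper simply asserts that the restriction of a divisor homomorphism to the submonoid $\monoid{f}$ is again a divisor homomorphism, whereas you explicitly verify, using divisor-closedness of $\monoid{f}$ in $\Int(S,V)$, that the cofactor $c=b/a\in\F$ actually lands back in $\monoid{f}$.
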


\begin{proof}
$\monoid{f}$ is a submonoid of $\F=\H\cap\Int(S,V)$. The
monoid homomorphism $\varphi$ in the theorem is the restriction
of the divisor homomorphism of Proposition \ref{local_divisor_hom}
to $\F$ and therefore itself a divisor homomorphism. If $\dS(f)\ne V$
then $\monoid{f}=\F$ by Theorem \ref{monoids_equal}. In this
case, $\varphi$ is a divisor theory by Proposition \ref{local_divisor_hom},
provided $T$ is minimal.
\end{proof}

Recall that a Krull domain $R$ is a domain satisfying the following
conditions with respect to $\Spec^1(R)$, the set of prime ideals
of height $1$:
\begin{enumerate}
\item
For every $P\in \Spec^1(R)$, the localization $R_P$ is a DVR.
\item
$R=\bigcap_{P\in \Spec^1(R)} R_P$
\item
Each non-zero $r\in R$ lies in only finitely many $P\in \Spec^1(R)$.
\end{enumerate}

If $R$ is a Krull domain, we denote the normalized discrete valuation
on the quotient field of $R$ whose valuation ring is $R_P$ by $\vP$.
Such a valuation is called an essential valuation of the Krull domain $R$.

Again, the existence of finite $D_P$-polynomially dense subsets of
$S$ relative to $\F$ in the following theorem is guaranteed by
Proposition \ref{finite_dense_set}.

\begin{thm}\label{Krull_divisor_hom}
Let $D$ be a Krull domain with quotient field $K$ and $S\subseteq D$
such that $S$ doesn't have any isolated points in $v$-adic topology
for any essential valuation $v$ of $D$. 
Let $f\in\Int(S,D)$, and $\monoid{f}$ the divisor-closed multiplicative
submonoid of $\Int(S,D)$ generated by $f$.

Let $H$ be the finite set of different monic 
irreducible factors of $f$ in $K[x]$ and $\H$ the multiplicative
submonoid of $K[x]$ generated by $H$ and the non-zero constants.

Let $\P$ be the finite set of primes $P$ of height $1$ of $D$ such 
that either $f\not\in D_P[x]$ or $f\in D_P[x]$ and $\vP(f(S))> 0$.
For each $P\in\P$, let $T_P$ be a finite subset of $S$ that is 
$D_P$-polynomially dense relative to $\H$ in $S$ and contains no
root of $f$.

Let 
\[(M,+)=
\sum_{h\in H}(\N_0,+)\oplus \sum_{P\in\P}\sum_{t\in T_P}(\N_0,+).
\]
Then
\[
\varphi\colon \monoid{f}\rightarrow M,\qquad
\varphi(g) = 
\left((v_h(g)\mid h\in H), ((\vP(g(t))\mid t\in T_P)\mid P\in\P)\right),
\]
is a divisor homomorphism.
\end{thm}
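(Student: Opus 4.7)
My plan is to verify the two defining conditions of a divisor homomorphism. The map $\varphi$ is clearly a monoid homomorphism, since $v_h$ and $\vP$ are additive on products and every coordinate of $\varphi(g)$ is a non-negative integer: if $g \in \monoid{f}$, then $g$ divides some power of $f$ in $K[x]$, so its irreducible factors in $K[x]$ all lie in $H$, making each $v_h(g) \ge 0$; and $\vP(g(t)) \ge 0$ because $g \in \Int(S,D) \subseteq \Int(S,D_P)$ and $t \in T_P \subseteq S$. It remains to show the divisor property: given $a, b \in \monoid{f}$ with $\varphi(a) \mid \varphi(b)$ in $M$, I set $c = b/a \in K(x)$ and aim to show $c \in \Int(S,D)$, whereupon $a \mid b$ in $\monoid{f}$ follows from Remark \ref{divisibility}.

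The first block of coordinates yields $v_h(a) \le v_h(b)$ for every $h \in H$. Since $a, b$ both lie in $\H$, this forces $c \in K[x]$ and in fact $c \in \H$. Using $D = \bigcap_{P \in \Spec^1(D)} D_P$, we have $\Int(S,D) = \bigcap_{P} \Int(S, D_P)$, so it suffices to check $c \in \Int(S, D_P)$ for every height-one prime $P$ of $D$. For $P \in \P$, the second block of coordinates gives $\vP(a(t)) \le \vP(b(t))$ for each $t \in T_P$, and hence $\vP(c(t)) \ge 0$; since $c \in \H$ and $T_P$ is $D_P$-polynomially dense in $S$ relative to $\H$, this gives $c \in \Int(S, D_P)$.

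The only delicate step, and the main obstacle, is the case $P \notin \P$, where no dense set $T_P$ is available. Here, by definition of $\P$, we have $f \in D_P[x]$ and $\vP(f(S)) = 0$, so $f$ is image-primitive when regarded as an element of $\Int(S, D_P)$. Since $a, b \in \monoid{f}$ divide powers of $f$ in $\Int(S,D) \subseteq \Int(S, D_P)$, they both lie in the divisor-closed submonoid of $\Int(S, D_P)$ generated by $f$. Applying Lemma \ref{imageprimitive_f} in the DVR $D_P$ identifies that submonoid with the multiplicative monoid generated by primitive irreducible factors of $f$ in $D_P[x]$ together with the units of $D_P$; in particular, $a$ and $b$ are primitive polynomials in $D_P[x]$. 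Gauss's Lemma in the UFD $D_P$ then forces $c = b/a$ to be primitive in $D_P[x]$, so $c \in D_P[x] \subseteq \Int(S, D_P)$. Combining the two cases yields $c \in \Int(S,D)$, completing the verification.
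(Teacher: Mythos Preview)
Your proof is correct and follows essentially the same approach as the paper's: both reduce to checking $c = b/a \in \Int(S, D_P)$ for each height-one prime $P$, handling $P \in \P$ via the $D_P$-polynomial density of $T_P$ relative to $\H$ and $P \notin \P$ via the image-primitive case of Lemma~\ref{imageprimitive_f}. The only cosmetic difference is that the paper packages the $P \in \P$ step as an appeal to Proposition~\ref{local_divisor_hom} and the $P \notin \P$ step as a direct citation of Lemma~\ref{imageprimitive_f}(3), whereas you unpack both arguments inline (invoking Gauss's Lemma explicitly for the latter).
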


\begin{proof}
It is clear that $\varphi$ is a monoid homomorphism. Now assume
$a,b\in\monoid{f}$ with $\varphi(a)\divides\varphi(b)$. It suffices
to show that $a$ divides $b$ in $K[x]$ and that the co-factor 
$c=b/a$ is in $\Int(S,D_P)$ for all $P\in \Spec^1(D)$, because then
$c\in\Int(S,D)$, which implies that $a$ divides $b$ in $\monoid{f}$
by Remark \ref{divisibility}.

Let $c=b/a$. That $c$ is in $K[x]$ follows from $v_h(a)\le v_h(b)$
for all irreducible factors $h$ of $a$ and $b$ in $K[x]$.

Consider a prime $P$ of height $1$ of $D$ that is not
in $\P$. For such a prime, $f\in D_P[x]$ and $f$ is 
image-primitive in $\Int(S,D_P)$. We may apply Lemma 
\ref{imageprimitive_f}~(3) and deduce that $c\in\Int(S,D_P)$.

Now for $P\in \P$, let $\psi_P$ be the projection of $M$ onto 
$\sum_{h\in H}(\N_0,+)\oplus \sum_{t\in T_P}(\N_0,+)$, and call 
the latter monoid $M(P)$. From $\varphi(a)\mid\varphi(b)$
it follows that $\psi_P(\varphi(a))$ divides $\psi_P(\varphi(b))$.
Let $\monoid{f}_P$ be the divisor closed submonoid of $\Int(S,D_P)$
generated by $f$. Then $\monoid{f}$ is a submonoid of $\monoid{f}_P$,
and $\psi_P\circ\varphi$ is the restriction to $\monoid{f}$ of the
divisor homomorphism in \ref{local_divisor_hom}. Now the fact that
$\psi_P(\varphi(a))$ divides $\psi_P(\varphi(b))$ implies
$c\in\Int(S,D_P)$, by Proposition $\ref{local_divisor_hom}$.
\end{proof}

\begin{cor}\label{monadically_Krull}
Let $D$ be a Krull domain and $S$ a subset that doesn't have any 
isolated points in any of the topologies given by essential
valuations of $D$. Let $f\in \Int(S,D)$. Then $\monoid{f}$, the
divisor closed submonoid of $\Int(S,D)$ generated by $f$, is a
Krull monoid. 

In particular, for every Krull domain $D$ and
every $f\in\Int(D)$, the divisor closed submonoid $\monoid{f}$
of $\Int(D)$ generated by $f$ is a Krull monoid.
\end{cor}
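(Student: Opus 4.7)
The plan is to reduce the corollary directly to Theorem \ref{Krull_divisor_hom} by invoking the standard characterization of Krull monoids: a commutative cancellative monoid is Krull if and only if it admits a divisor homomorphism into a free commutative monoid, i.e., into a direct sum of copies of $(\N_0,+)$. First I would observe that $\monoid{f}$, being a submonoid of the multiplicative monoid $\Int(S,D)\setminus\{0\}$ of a domain, is commutative and cancellative. Theorem \ref{Krull_divisor_hom} then supplies precisely such a divisor homomorphism from $\monoid{f}$ into a finite direct sum of copies of $(\N_0,+)$, and the first assertion of the corollary follows at once.

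For the ``in particular'' clause, taking $S = D$, I must verify that $D$ has no isolated points in the $\vP$-adic topology for any $P\in\Spec^1(D)$, so that the first part applies. Fix $a\in D$ and $n\ge 1$; a basic $\vP$-neighborhood of $a$ in $K$ is $U_n = \{x\in K \mid \vP(x-a)\ge n\}$. Since $P$ is a nonzero prime ideal of the Krull domain $D$, the ideal $P^n$ is nonzero and hence contains some $p\ne 0$; then $a+p\in D\cap U_n$ and $a+p\ne a$, so $a$ is not isolated in $D$. Thus the hypothesis of the first statement is fulfilled for $S=D$, and the second assertion follows immediately from the first.

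Since the substantive work is already done in Theorem \ref{Krull_divisor_hom} (which in turn rests on the finite polynomially dense sets produced in Proposition \ref{finite_dense_set}), there is no real obstacle at this stage; the argument is purely bookkeeping. The only point that warrants a line of checking is the isolated-points condition for $S=D$, and even that reduces to the observation that powers of a nonzero prime of $D$ contain nonzero elements.
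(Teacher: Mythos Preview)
Your proof is correct and follows essentially the same approach as the paper, which simply cites Theorem~\ref{Krull_divisor_hom} together with the standard characterization of Krull monoids via divisor homomorphisms into free commutative monoids. You add an explicit verification that $S=D$ has no isolated points in any essential valuation topology, a detail the paper leaves implicit.
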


\begin{proof}
Indeed, the existence of a divisor homomorphism from $\monoid{f}$ 
to a finite sum of copies of $(\N_0,+)$ in Theorem \ref{Krull_divisor_hom}
ensures that $\monoid{f}$ is a Krull monoid, see 
\cite{GeHK06nuf}[Thm.~2.4.8].
\end{proof}

Monoids with the property that the divisor closed submonoid 
generated by any single element is a Krull monoid have been called 
\textit{monadically Krull} by A.~Reinhart. Without using divisor
homomorphisms, through an approach completely different from ours,
Reinhart showed that $\Int(D)$ is monadically Krull whenever $D$ is 
a principal ideal domain \cite{Rei14monK}[Thm.~5.2]. 

Corollary \ref{monadically_Krull} generalizes Reinhart's result to Krull 
domains, and to integer-valued polynomials on sufficiently nice subsets.
The explicit divisor homomorphisms of Proposition~\ref{local_divisor_hom}
and Theorems~\ref{monadic_local_divisor_theory} and \ref{Krull_divisor_hom}
give additional information on the arithmetic of submonoids of $\Int(D)$. 
It remains an open problem to find the precise divisor theories 
(cf.~Def.~\ref{divisor_hom_def}) of those monoids of integer-valued
polynomials for which the above theorems provide divisor homomorphisms.
\bigskip

\textbf{Acknowledgment.} Enthusiastic thanks go out to the anonymous
referee for his/her meticulous reading of the paper and the resulting
corrections.
\bigskip

\bibliography{complete-dh}
\bibliographystyle{siamese}
\enlargethispage{2cm}
\end{document}